\newtheorem{theorem}{Theorem}[section]
\newtheorem{corollary}[theorem]{Corollary}
\newtheorem{definition}[theorem]{Definition}
\newtheorem{example}[theorem]{Example}
\newtheorem{lemma}[theorem]{Lemma}
\begin{document}

\title[Sudoku Necessary Condition]{A Necessary Solution \\Condition for Sudoku}

\author{Thomas Fischer}

\address{Fuchstanzstr. 20, 60489 Frankfurt am Main, Germany.}
%

\email{dr.thomas.fischer@gmx.de}

\date{}     

\begin{abstract}
We develop a new discrete mathematical model which includes the classical 
Sudoku puzzle, Latin Squares and gerechte designs. This problem is described 
by integer equations and a special type of inequality constraint. We consider 
solutions of this generalized problem and derive a necessary condition on these
solutions. The results are illustrated with examples.
\end{abstract}

\thanks{Thanks to Peter Schermer for providing the article of Behrens.}

\keywords{Sudoku, Necessary Conditions, Constraint Systems, Nonlinear Inequalities.}

\subjclass[2010]{Primary 90C10; Secondary 90C30 65K05}

\maketitle

                        %
                        %
\section{Introduction} \label{S:intro} 
A Sudoku is a square consisting of a 9$\times$9 grid which is
partly pre-populated by numbers between 1 and 9 called the givens.
The problem consists of finding numbers between 1 and 9 for all
unpopulated cells, such that each row, each column and each block 
consists of exactly the numbers $1, \ldots, 9$. The blocks of a Sudoku 
partition the Sudoku square into subsquares of size 3$\times$3. Each 
Sudoku consists of 9 rows, 9 columns and 9 blocks.

\begin{example}
A lot of examples for Sudoku are spread over the mathematical literature
and you can also find some in your daily newspaper or spread over the internet. 
We refer here to a typical Sudoku square of Delahaye \cite{Del} 
depicted in Fig. \ref{Fig1}.
\begin{figure}
\includegraphics{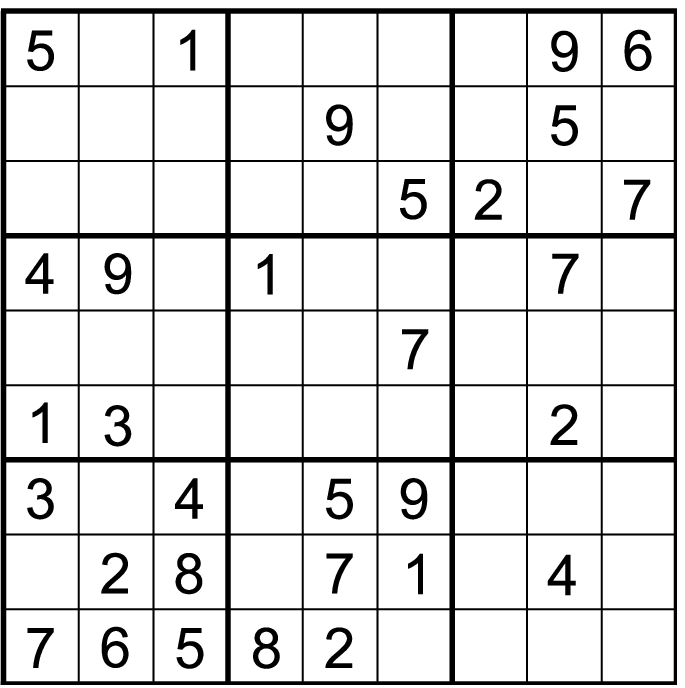}
\caption{A typical Sudoku} 
\label{Fig1}
\end{figure}
\end{example}

In Section~\ref{S:model} we introduce a mathematical model based on an 
integer constraint system, which we call the generalized Sudoku problem.
In Section~\ref{S:sudoku} the Sudoku puzzle is reformulated as a special case 
of this problem.

The mathematical model depends on an integer $n \ge 2$ and is more general 
than a Sudoku puzzle. Sudoku problems of arbitrary size are covered by this model. 
The present approach also covers Latin Squares (see D\'{e}nes and Keedwell \cite{DK1}) 
and ``gerechte Anordnungen",  introduced by Behrens \cite{Beh}.
The latter is usually termed as gerechte designs (see Vaughan 
\cite{Vau}) in the literature.

The main result in Section~\ref{S:neccond} states a necessary condition on the
solutions of the generalized Sudoku problem. The proof of this result is based
on algebraic properties and does not involve a separation theorem.

The use of necessary conditions 
is a powerful tool in the investigation of nonlinear optimization problems (see 
Fiacco and McCormick \cite{FC} or Luenberger \cite{Luen}). Beside optimization 
theory the Gaussian elimination process for solving linear systems of equations 
starts with the assumption of an existing solution and derives necessary 
conditions on this solution. In the theory of discrete optimization usually other 
techniques like relaxation or cutting planes are used (see Nemhauser and 
Wolsey \cite{NW}).

There exist several papers dealing with algorithms for solving Sudoku. 
Classic approaches are brute force methods or paper-and-pencil methods 
(Crook \cite{Cro}). It is also possible to use branch-and-cut algorithms (see Kaibel and 
Koch \cite{KK}) or the Algorithm X implementing dancing links described by Knuth \cite{Knu}. 

The present paper describes a Sudoku puzzle as an integer programming problem. Similar 
approaches can be found in the literature (see Fontana \cite{Fon}). To the best of my 
knowledge there do not exist results or methods based on necessary solution conditions, so far.

Finally, we collect some basic terms and notations. Let $\mathds{Z}$ denote the set of integers
and let $\mathds{Z}^n$ denote the $n$-times cartesian product. The elements of a set are called 
distinct if they are pairwise distinct, i.e., each two elements of the set are not equal. The vectors 
$\mathbf{0}$ respectively $\mathbf{1}$ denote the zero respectively one vector, consisting only of 
zeros respectively ones in each component.
The number of components is often indicated by an index. Each vector is considered to be a column vector.
$U$ denotes the identity matrix. The transpose of a vector or a matrix is indicated by a superscript $”T”$.
We allow explicitly the use of vectors respectively matrices of dimension zero, i.e., without components 
respectively entries. In this case we call the matrix the empty matrix.
The sign function is denoted by $sgn$ and is generalized in Definition \ref{D:sgn} to vectors. 
We consider the sum over an empty index set to be zero.

                        %
                        %
\section{The Mathematical Model} \label{S:model}
Let $n$ be an integer with $n \ge 1$. We define the sum
\[                                                         
s(n)  = \sum_{i=1}^{n-1}i
\]
and define a matrix $A(n)$ with $s(n)$ rows and $n$ columns inductively. 
For $n=1$, let $A(1)$ 
denote the empty matrix, i.e., a matrix without entries.
Assume the matrix $A(n-1)$ had been defined with $s(n-1)$ rows 
and $n-1$ columns. Then we set
{
\renewcommand{\arraystretch}{2.0}     
\[
A(n) = 
\begin{pmatrix}    
\begin{array}{c|c}
\mathbf{1}_{n-1} & - U_{n-1} \\
\hline
\mathbf{0}_{s(n-1)} & A(n-1)
\end{array}
\end{pmatrix}.
\]
}

\begin{example}             \label{E:1}
The matrix $A(n)$ for $n=9$ is depicted in Fig. \ref{Fig2}.
\end{example}

We state two elementary properties of the matrix $A(n)$.

1. The vector of ones $\mathbf{1}_{n}=(1, \ldots, 1)^T \in \mathds{Z}^n$ is mapped 
to the zero-vector by $A(n)$, i.e., $A(n)\mathbf{1}_{n} = \mathbf{0}_{s(n)}$.

2. The matrix $A(n)$ has rank $n-1$.

We extend the matrix $A(n)$ to a matrix $A$ with $n \cdot s(n)$ rows 
and $n^2$ columns. The matrix $A$ consists in the ``main diagonal" of $n$ 
matrices $A(n)$ and the remaining values are set to zero. The matrix $A$ 
depends on the value $n$, but we do not state this dependence explicitly.

{
\renewcommand{\arraystretch}{2.0}     
\[
A = 
\begin{pmatrix}    
\begin{array}{c|c|p{1cm} c p{1cm}|c}
A(n) & 0 & & \hdots & & 0 \\
\hline
0 & A(n) & & \hdots&  & 0 \\
\hline
& & & & & \\
\vdots & \vdots & & \ddots &  & \vdots \\
& & & & & \\
\hline
0 & 0 & & \hdots & & A(n)
\end{array}
\end{pmatrix}.
\]
}

Given the set $\{1, \ldots, n^2\} \subset \mathds{Z}$, let $\pi$ be any
permutation on this set, i.e., 
\[
\pi: \{1, \ldots, n^2\} \longrightarrow \{1, \ldots, n^2\}
\]
be a permutation. We extend the notion of permutation to the matrix $A$,  
i.e., we define $\pi (A) = (a^{\pi^{-1}(1)}, \ldots, a^{\pi^{-1}(n^2)})$, 
where $a^j$ denotes the $j^{th}$ column of $A$ for $j = 1, \ldots , n^2$. 
Analoguously we define the term 
$\pi(x) =  (x_{\pi^{-1}(1)}, \ldots, x_{\pi^{-1}(n^2)})^T$, for any 
$x = (x_1, \ldots, x_{n^2})^T \in \mathds{Z}^{n^2}$.
We define the matrix $A_\pi = \pi (A)$, i.e., we interchange the columns of $A$ 
according to the permutation $\pi$.

\begin{figure}
{
\renewcommand{\arraystretch}{0.5}     
\small                                                 
\[
\begin{bmatrix}    
1 & -1 & 0 & 0 & 0 & 0 & 0 & 0 & 0\\
1 & 0 & -1 & 0 & 0 & 0 & 0 & 0 & 0\\
1 & 0 & 0 & -1 & 0 & 0 & 0 & 0 & 0\\
1 & 0 & 0 & 0 & -1 & 0 & 0 & 0 & 0\\
1 & 0 & 0 & 0 & 0 & -1 & 0 & 0 & 0\\
1 & 0 & 0 & 0 & 0 & 0 & -1 & 0 & 0\\
1 & 0 & 0 & 0 & 0 & 0 & 0 & -1 & 0\\
1 & 0 & 0 & 0 & 0 & 0 & 0 & 0 & -1\\

0 & 1 & -1 & 0 & 0 & 0 & 0 & 0 & 0\\
0 & 1 & 0 & -1 & 0 & 0 & 0 & 0 & 0\\
0 & 1 & 0 & 0 & -1 & 0 & 0 & 0 & 0\\
0 & 1 & 0 & 0 & 0 & -1 & 0 & 0 & 0\\
0 & 1 & 0 & 0 & 0 & 0 & -1 & 0 & 0\\
0 & 1 & 0 & 0 & 0 & 0 & 0 & -1 & 0\\
0 & 1 & 0 & 0 & 0 & 0 & 0 & 0 & -1\\

0 & 0 & 1 & -1 & 0 & 0 & 0 & 0 & 0\\
0 & 0 & 1 & 0 & -1 & 0 & 0 & 0 & 0\\
0 & 0 & 1 & 0 & 0 & -1 & 0 & 0 & 0\\
0 & 0 & 1 & 0 & 0 & 0 & -1 & 0 & 0\\
0 & 0 & 1 & 0 & 0 & 0 & 0 & -1 & 0\\
0 & 0 & 1 & 0 & 0 & 0 & 0 & 0 & -1\\

0 & 0 & 0 & 1 & -1 & 0 & 0 & 0 & 0\\
0 & 0 & 0 & 1 & 0 & -1 & 0 & 0 & 0\\
0 & 0 & 0 & 1 & 0 & 0 & -1 & 0 & 0\\
0 & 0 & 0 & 1 & 0 & 0 & 0 & -1 & 0\\
0 & 0 & 0 & 1 & 0 & 0 & 0 & 0 & -1\\

0 & 0 & 0 & 0 & 1 & -1 & 0 & 0 & 0\\
0 & 0 & 0 & 0 & 1 & 0 & -1 & 0 & 0\\
0 & 0 & 0 & 0 & 1 & 0 & 0 & -1 & 0\\
0 & 0 & 0 & 0 & 1 & 0 & 0 & 0 & -1\\

0 & 0 & 0 & 0 & 0 & 1 & -1 & 0 & 0\\
0 & 0 & 0 & 0 & 0 & 1 & 0 & -1 & 0\\
0 & 0 & 0 & 0 & 0 & 1 & 0 & 0 & -1\\

0 & 0 & 0 & 0 & 0 & 0 & 1 & -1 & 0\\
0 & 0 & 0 & 0 & 0 & 0 & 1 & 0 & -1\\

0 & 0 & 0 & 0 & 0 & 0 & 0 & 1 & -1
\end{bmatrix}
\]
}
\caption{The matrix $A(9)$} 
\label{Fig2}
\end{figure}

\begin{definition} \label{D:nonzero}
Let $s \ge 1$. For any point $y = (y_1, \ldots, y_s)^T \in \mathds{Z}^s$ 
we write $y < > \mathbf{0}$ if each component of $y$ is nonzero, 
i.e., if $y_i \ne 0$ for $i=1, \ldots, s$.
\end{definition}

This definition should not be confused with the expression $y \ne \mathbf{0}$, 
where only one component of $y$ has to be nonzero.

Now we are in position to state the generalized Sudoku problem. Given is $n \ge 2$, 
some permutations $\pi_1$, $\pi_2$, $\pi_3$ on $\{1, \ldots, n^2\}$, 
some $0 \le k \le n^2$, an index set $\{i_1, \ldots, i_k \} \subset \{ 1, \ldots, n^2 \}$ 
and givens $g_{i_1}, \ldots , g_{i_k} \in\mathds{Z}$ with $1 \le g_{i_l} \le n$ for 
$l = 1, \ldots, k$.

The generalized Sudoku problem consists in finding $x = (x_1, \ldots, x_{n^2})^T$ in
$\mathds{Z}^{n^2}$, such that $1 \le x_i \le n$ for $i = 1, \ldots , n^2$, 
\[
\begin{array}{lll}
A_{\pi_1} x & <> & \mathbf{0}, \\
A_{\pi_2} x & <> & \mathbf{0}, \\
A_{\pi_3} x & <> & \mathbf{0},
\end{array}
\]
and $x_{i_l} = g_{i_l}$ for $l = 1, \ldots, k$.

We will investigate the relation between this model and the Sudoku puzzle in detail in the next section.

Due to the prescribed givens some of the components of $x$ are known in advance. Therefore it 
is possible to reduce the size of this problem from $n$ to $n-k$ variables. This is advantageous for 
numerical purposes, but may interfere theoretical properties. We decided not to eliminate these
variables in our considerations and maintain a distinction between equality and inequality constraints.

This general approach includes gerechte designs (see Vaughan \cite{Vau}).
In particular the case $\pi_3 = \pi_2$ is allowed and in this case the generalized 
Sudoku problem describes Latin Squares (see D\'{e}nes and Keedwell \cite{DK1}).

The generalized Sudoku problem can be classified as a nonlinear integer problem. 
This problem even possesses nonconvex constraints. This contrasts with 
famous integer optimization problems like the traveling salesman problem and 
the knapsack problem, which usually are modeled as linear integer problems 
(see Papadimitriou and Steiglitz \cite{PS}).

                        %
                        %
\section{Modeling Sudoku} \label{S:sudoku}
In this section we show, that a classical Sudoku puzzle can be refomulated as a generalized 
Sudoku problem. We state an elementary result for later use without proof.

\begin{lemma} \label{L:1}
Let $n \geq 2$ and $x_1, \ldots, x_n$ be integers. The following statements are equivalent:\\
(i) $x_1, \ldots , x_n$ are distinct and $1 \leq x_i \leq n$ for $i = 1, \ldots, n$. \\
(ii) $\{x_1, \ldots , x_n\} = \{1, \ldots , n\}$.
\end{lemma}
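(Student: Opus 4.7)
The plan is to prove the two implications separately, with both directions reducing to an elementary counting argument about finite subsets of $\{1,\ldots,n\}$. The key observation I would rely on is the standard fact that any subset of a finite set of cardinality $n$ that itself has cardinality $n$ must coincide with the whole set; equivalently, a list of $n$ elements drawn from an $n$-element set is injective if and only if it is surjective.

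For the direction (i) $\Rightarrow$ (ii), I would note that the set $S = \{x_1,\ldots,x_n\}$ is contained in $\{1,\ldots,n\}$ by the bound $1 \le x_i \le n$. Because the $x_i$ are pairwise distinct, the cardinality of $S$ equals $n$. Since $S \subseteq \{1,\ldots,n\}$ and $|S| = |\{1,\ldots,n\}| = n$, the two sets must coincide.

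For the direction (ii) $\Rightarrow$ (i), the containment $\{x_1,\ldots,x_n\} \subseteq \{1,\ldots,n\}$ immediately gives $1 \le x_i \le n$ for every $i$. For distinctness I would argue by contradiction: if some $x_i = x_j$ with $i \ne j$, then the list $x_1,\ldots,x_n$ attains at most $n-1$ distinct values, so $|\{x_1,\ldots,x_n\}| \le n-1 < n = |\{1,\ldots,n\}|$, contradicting the assumed equality. Hence all $x_i$ are distinct.

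There is essentially no obstacle here beyond being careful to distinguish the list $x_1,\ldots,x_n$ (with possible repetitions) from its underlying set; the whole lemma is a direct consequence of the pigeonhole principle applied to a map between two $n$-element sets. This is why the authors state it without proof and refer to it only as an elementary fact for later use.
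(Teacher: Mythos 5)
Your argument is correct and is exactly the standard pigeonhole/cardinality argument one would expect; the paper itself states this lemma explicitly without proof, so there is nothing to diverge from, and your write-up supplies the elementary justification the authors had in mind.
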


\begin{lemma} \label{L:2}
Let $n \ge 2$ and $x=(x_1, \ldots, x_n)^T \in \mathds{Z}^n$. Then $A(n)x <> \mathbf{0}$ 
if and only if the components $x_1, \ldots, x_n$ of $x$ are distinct.
\end{lemma}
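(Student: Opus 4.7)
The plan is to proceed by induction on $n$, exploiting the recursive block structure of $A(n)$ directly from its definition.

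For the base case $n=2$, the matrix $A(2)$ is the single row $(1,-1)$, so $A(2)x = x_1 - x_2$. Then $A(2)x <> \mathbf{0}$ amounts to $x_1 \ne x_2$, which is exactly the distinctness condition for two components.

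For the inductive step, assume the statement holds for $n-1$ and let $x = (x_1, \ldots, x_n)^T$. Writing $x' = (x_2, \ldots, x_n)^T$, the block definition of $A(n)$ gives
\[
A(n)x = \begin{pmatrix} x_1 \mathbf{1}_{n-1} - U_{n-1} x' \\ A(n-1) x' \end{pmatrix}
      = \begin{pmatrix} x_1 - x_2 \\ \vdots \\ x_1 - x_n \\ A(n-1)x' \end{pmatrix}.
\]
Hence $A(n)x <> \mathbf{0}$ holds if and only if (a) $x_1 \ne x_i$ for every $i = 2, \ldots, n$, and (b) $A(n-1)x' <> \mathbf{0}$. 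By the inductive hypothesis, (b) is equivalent to $x_2, \ldots, x_n$ being distinct, and combined with (a) this is exactly the statement that $x_1, \ldots, x_n$ are pairwise distinct. This closes the induction.

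There is no real obstacle here: the only thing to check carefully is that the top block of $A(n)$ really does produce the $n-1$ differences $x_1 - x_i$ in the claimed order, which is immediate from $\mathbf{1}_{n-1}$ being the first column and $-U_{n-1}$ occupying the remaining columns. One could alternatively give a direct (non-inductive) argument by observing that the rows of $A(n)$ are precisely the $\binom{n}{2} = s(n)$ vectors $e_i - e_j$ with $i < j$, so that the entries of $A(n)x$ enumerate all pairwise differences $x_i - x_j$; but the inductive formulation matches the recursive definition of $A(n)$ more cleanly and is what I would write.
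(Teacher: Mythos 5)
Your proof is correct and follows essentially the same route as the paper: induction on $n$ with base case $n=2$, using the block structure of $A(n)$ to split $A(n)x <> \mathbf{0}$ into the conditions $x_1 \ne x_i$ for $i = 2, \ldots, n$ and $A(n-1)(x_2,\ldots,x_n)^T <> \mathbf{0}$, then invoking the inductive hypothesis. The closing remark about the rows being the vectors $e_i - e_j$ with $i<j$ is a nice observation but not needed.
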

\begin{proof}
We show the claim by induction on $n$. The claim is true for $n=2$, since
$A(2)x = x_1 - x_2$ and this expression is nonzero if and only if $x_1$ and $x_2$ are distinct.

Assume the claim holds for $n-1$ and consider $x=(x_1, \ldots, x_n)^T$. By induction assumption, 
$A(n-1)(x_2, \ldots, x_n)^T <> \mathbf{0}$ if and only if $x_2, \ldots, x_n$ are distinct.
Obviously $x_1, \ldots, x_n$ are distinct if and only if $x_1 - x_i \ne 0$ for 
$i = 2, \ldots, n$ and $x_2, \ldots, x_n$ are distinct. The condition 
$x_1 - x_i \ne 0$ for $i = 2, \ldots, n$ can be expressed as
\[
\begin{pmatrix}
\begin{array}{c|c}
\mathbf{1}_{n-1} & - U_{n-1}
\end{array}
\end{pmatrix}
\begin{pmatrix} 
x_1 \\ \vdots \\ x_n
\end{pmatrix}
<> \mathbf{0}.
\]
By definition of $A(n)$ the claim is proved.
\end{proof}

We combine the Lemma \ref{L:1} and \ref{L:2} and obtain:

\begin{lemma} \label{L:3}
Let $n \ge 2$ and let $x=(x_1, \ldots, x_n)^T \in \mathds{Z}^n$. The following statements are equivalent: \\
(i) $A(n)x <> \mathbf{0}$ and $1 \le x_i \le n$ for $i=1, \ldots, n$. \\
(ii) $\{x_1, \ldots, x_n\} = \{1, \ldots, n\}$.
\end{lemma}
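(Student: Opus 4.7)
The plan is simply to concatenate the two previously established equivalences, since Lemma \ref{L:2} handles the algebraic part of condition (i) and Lemma \ref{L:1} handles the resulting combinatorial statement. No separate case analysis or induction is needed.

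Concretely, I would start from condition (i) and apply Lemma \ref{L:2} to rewrite the hypothesis $A(n)x <> \mathbf{0}$ as the equivalent statement that the components $x_1, \ldots, x_n$ are pairwise distinct. Coupled with the unchanged range restriction $1 \le x_i \le n$, this gives exactly the hypothesis of Lemma \ref{L:1}(i). A direct appeal to Lemma \ref{L:1} then yields the set equality $\{x_1, \ldots, x_n\} = \{1, \ldots, n\}$, i.e., condition (ii).

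For the reverse direction, I would start from $\{x_1, \ldots, x_n\} = \{1, \ldots, n\}$, invoke Lemma \ref{L:1} to conclude that the $x_i$ are distinct and satisfy $1 \le x_i \le n$, and then apply Lemma \ref{L:2} to turn distinctness into $A(n)x <> \mathbf{0}$. Since both invoked lemmas are already biconditionals, the argument is symmetric and requires no additional verification.

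I anticipate no real obstacle: the proof is essentially a one-line substitution of equivalent conditions, and the only thing to be careful about is explicitly carrying the range constraint $1 \le x_i \le n$ through both steps, since Lemma \ref{L:2} by itself does not encode it.
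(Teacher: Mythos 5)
Your proposal is correct and matches the paper exactly: the paper gives no separate proof of this lemma, introducing it with the single remark that it is obtained by combining Lemma \ref{L:1} and Lemma \ref{L:2}, which is precisely the chain of equivalences you describe. Your added care in carrying the range constraint $1 \le x_i \le n$ through both directions is the right (and only) point of attention.
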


We introduce the notion of an $x$-tableau, where the components
$x_1, \ldots, x_{n^2}$ of points $x \in \mathds{Z}^{n^2}$
are arranged row-wise from the left in an ${n \times n}$ square.

{
\renewcommand{\arraystretch}{2.0}     
\[    
\begin{array}{|c|c|p{1cm} c p{1cm}|c|}    
\hline
x_1 & x_2 & & \hdots & & x_n \\
\hline
x_{n+1} & x_{n+2} & & \hdots&  & x_{2 \cdot n} \\
\hline
& & & & & \\
\vdots & \vdots & & \ddots &  & \vdots \\
& & & & & \\
\hline
x_{(n-1) \cdot n+1} & x_{(n-1) \cdot n+2}  & & \hdots & & x_{n^2} \\
\hline
\end{array}
\]
}
In the sequel we do not distinguish between the $x$-tableau consisting of the components of 
$x$ and an $x$-tableau consisting of the corresponding indices.

Let $\pi_1$ be the identical permutation on $\{1, \ldots , n^2\}$, i.e.,  $\pi_1(i) = i$ for 
$i=1, \ldots, n^2$. If we arrange a point $x=(x_1, \ldots, x_{n^2})^T \in 
\mathds{Z}^{n^2}$, such that $1 \le x_i \le n$ for $i = 1, \ldots , n^2$, in 
an $x$-tableau, then $A_{\pi_1} x < > \mathbf{0}$ holds if and only if the values in each row 
of the $x$-tableau consist of $1, \ldots, n$. This follows from Lemma \ref{L:3}
and the definition of $A_{\pi_1}$.

From the next lemma we need part (i) in this section and part (ii) in a later section.

\begin{lemma} \label{L:4}
For any permutation $\pi$ on $\{1, \ldots, n^2\}$ holds: \\
(i) $A_\pi x = A \pi^{-1}(x)$ for each $x \in \mathds{Z}^{n^2}$ and \\
(ii) $A^T_\pi \lambda = \pi ( A^T \lambda )$ for each $\lambda \in \{ -1, +1\}^{n \cdot s(n)}$.
\end{lemma}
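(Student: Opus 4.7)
The plan is to unfold both sides of each identity to a common sum over columns of $A$, using the two definitions already given: the columns of $A_\pi$ are $a^{\pi^{-1}(j)}$, and the $j$-th entry of $\pi(x)$ is $x_{\pi^{-1}(j)}$. The key is careful bookkeeping of $\pi$ versus $\pi^{-1}$; once the definitions are written out, the identities reduce to a single change of summation index.

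For part (i), I would start from the column expansion $A_\pi x = \sum_{j=1}^{n^2} x_j a^{\pi^{-1}(j)}$. On the other side, the $j$-th component of $\pi^{-1}(x)$ is $x_{\pi(j)}$ (apply the defining formula for $\pi(\cdot)$ with $\pi$ replaced by $\pi^{-1}$), so $A\pi^{-1}(x) = \sum_{j=1}^{n^2} x_{\pi(j)} a^j$. Substituting $k = \pi(j)$, equivalently $j = \pi^{-1}(k)$, and noting that $\pi$ is a bijection of the index set, the right-hand side becomes $\sum_{k=1}^{n^2} x_k a^{\pi^{-1}(k)}$, which matches $A_\pi x$.

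For part (ii), I would compute componentwise. The $i$-th entry of $A_\pi^T \lambda$ equals the inner product of the $i$-th column of $A_\pi$ with $\lambda$, i.e., $(a^{\pi^{-1}(i)})^T \lambda$. But $(a^{j})^T \lambda$ is exactly the $j$-th component of $A^T \lambda$, so $(A_\pi^T \lambda)_i = (A^T \lambda)_{\pi^{-1}(i)}$. By the definition of $\pi(\cdot)$ applied to the vector $A^T \lambda \in \mathds{Z}^{n^2}$, this is precisely the $i$-th component of $\pi(A^T \lambda)$. (The restriction $\lambda \in \{-1,+1\}^{n \cdot s(n)}$ plays no role in the algebra and is just the intended use later.)

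The main obstacle is purely notational: there is a genuine risk of confusing $\pi$ and $\pi^{-1}$, since both the permutation of columns and the permutation of components are defined through $\pi^{-1}$. I would therefore keep the argument as a literal application of the two definitions and a single index substitution, avoiding any intermediate claim about how $\pi$ acts on rows or on $A^T$ directly.
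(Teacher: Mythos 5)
Your proposal is correct and follows essentially the same route as the paper: part (i) is the same column expansion with a single reindexing $k=\pi(j)$, and part (ii) is the same componentwise identification of $(A_\pi^T\lambda)_i$ with $(A^T\lambda)_{\pi^{-1}(i)}=(\pi(A^T\lambda))_i$. Your side remark that the restriction on $\lambda$ is irrelevant to the algebra is also accurate.
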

\begin{proof} 
(i) Let $x = (x_1, \ldots, x_{n^2})^T \in \mathds{Z}^{n^2}$, then
\[
A_\pi x = \sum^{n^2}_{j=1} a^{\pi^{-1}(j)}x_j 
= \sum^{n^2}_{j=1} a^j x_{\pi(j)} = A \pi^{-1}(x).
\]
(ii) Let $\lambda \in \{-1,+1\}^{n \cdot s(n)}$, then
\[
A^T_\pi \lambda = 
\begin{pmatrix} 
a^{\pi^{-1}(1)} \lambda \\
\vdots \\
a^{\pi^{-1}(n^2)} \lambda
\end{pmatrix}
= \pi
\begin{pmatrix} 
a^1 \lambda \\ 
\vdots \\
 a^{n^2} \lambda
\end{pmatrix}
= \pi (A^T \lambda ).
\]
\end{proof}

Let $\pi_2$ be  the permutation, which maps the rows of the $x$-tableau to the columns of the $x$-tableau, i.e.,
\[
\begin{array}{lll}
\pi_2 (1) & = & 1 \\
\pi_2 (2) & = & n + 1 \\
\pi_2 (3) & = & 2 \cdot n + 1 \\
\vdots & & \\
\pi_2 (n) & = & (n-1) \cdot n + 1 \\
\pi_2 (n+1) & = & 2\\
\hdots & &.
\end{array}
\]

Using Lemma \ref{L:4}, if we arrange a point 
$x=(x_1, \ldots, x_{n^2})^T \in \mathds{Z}^{n^2}$,
such that $1 \le x_i \le n$ for $i = 1, \ldots , n^2$, in an $x$-tableau, 
then $A_{\pi_2} x = A \pi_2^{-1} (x) < > \mathbf{0}$ holds if and only if the values 
in each column of the $x$-tableau consist of $1, \ldots, n$.

If $n$ is a square number the $x$-tableau can be covered
by $n$ non-overlapping subsquares of size $\sqrt{n} \times \sqrt{n}$ and 
we number these subsquares row-wise from the left.
Let $\pi_3$ be the permutation, which maps
the $i$-th row of the $x$-tableau to the $i$-th subsquare for $i=1, \ldots, n$, i.e.,
\[
\begin{array}{lll}
\pi_3 (1) & = & 1 \\
\vdots & & \\
\pi_3 (\sqrt{n}) & = & \sqrt{n}\\
\pi_3 (\sqrt{n} + 1) & = & n + 1\\
\vdots & & \\
\pi_3 (2 \cdot \sqrt{n}) & = & n + \sqrt{n}\\
\vdots & & \\
\pi_3 (n) & = & (\sqrt{n} - 1) \cdot n + \sqrt{n}\\
\hdots & &.
\end{array}
\]
Using Lemma \ref{L:4}, if we arrange a point 
$x=(x_1, \ldots, x_{n^2})^T \in \mathds{Z}^{n^2}$,
such that $1 \le x_i \le n$ for $i = 1, \ldots , n^2$, in an $x$-tableau, 
then $A_{\pi_3} x = A \pi_3^{-1} (x) < > \mathbf{0}$ holds 
if and only if the values in each $\sqrt{n} \times \sqrt{n}$ subsquare of the 
$x$-tableau consist of $1, \ldots, n$.

If $n = 9$ and $\pi_1$, $\pi_2$, $\pi_3$ are defined as before the generalized Sudoku problem is 
equivalent to the classical Sudoku problem. Each solution of the original Sudoku problem solves the 
generalized Sudoku problem and vice-versa.

Also the Sudoku puzzle can be modeled as a linear integer problem (see Kaibel and Koch \cite{KK} 
and Provan \cite{Pro}).  Kaibel and Koch \cite{KK} proposed a model where the Sudoku problem 
is described by 0-1-variables and gave a short overview on important research results without stating 
results explicitly. They also described how to apply existing software packages to their problem successfully.
The same model had been considered by Provan \cite{Pro}, who proved a unicity result.

Another model is to formulate a Sudoku puzzle as an exact cover problem. The matrix describing 
this problem is the transpose of the matrix considered by Kaibel and Koch \cite{KK} and Provan \cite{Pro}. 

                        %
                        %
\section{Preliminaries} \label{S:prelim}
In this section we start with a generalized sign function based on the classical sign function, 
also denoted by $sgn$.

\begin{definition}     \label{D:sgn}
Let $s \ge 1$. For any point $y = (y_1, \ldots, y_s)^T \in \mathds{Z}^s$ 
with $y<> \mathbf{0}$ we define the generalized sign function 
$sgn: \mathds{Z}^s \longrightarrow \mathds{Z}^s $ by
\[
sgn ( y ) =
\begin{pmatrix}
sgn ( y_1 ) \\
\vdots \\
sgn ( y_s )
\end{pmatrix}.
\]
\end{definition}

We return to the matrix $A(n)$.

\begin{lemma}   \label{L:5}
Let $n \ge 2$ and $x = (x_1, \ldots, x_n)^T \in \mathds{Z}^n$ with 
$A(n) x <> \mathbf{0}$. The point 
$A^T (n) sgn(A(n) x) \in \mathds{Z}^n$ consists of the components
\[
- \sum_{j=1}^{i-1} sgn( x_j - x_i ) + \sum_{j=i+1}^{n} sgn ( x_i - x_j )
\]
for $i=1, \ldots, n$.
\end{lemma}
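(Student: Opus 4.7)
The plan is to prove this by first making the combinatorial structure of $A(n)$ completely explicit and then just reading off the matrix-vector product. From the inductive definition, I would first establish (by induction on $n$) that the rows of $A(n)$ are in one-to-one correspondence with the pairs $(i,j)$ with $1 \le i < j \le n$, the row associated with $(i,j)$ being $e_i^T - e_j^T$. The base case $n=2$ is immediate, and the inductive step uses the block form: the top block $(\mathbf{1}_{n-1} \mid -U_{n-1})$ contributes exactly the rows $e_1^T - e_j^T$ for $j=2,\dots,n$, while the lower block $(\mathbf{0} \mid A(n-1))$ contributes, by the induction hypothesis applied to indices $2,\dots,n$, the rows $e_i^T - e_j^T$ for $2 \le i < j \le n$. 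Together these account for all pairs $i<j$ exactly once.

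Once that is in hand, the components of $A(n)x$ are precisely the differences $x_i - x_j$ for $i<j$, so $sgn(A(n)x)$ has components $sgn(x_i - x_j)$ for $i<j$. To compute the $i$th entry of $A^T(n)\,sgn(A(n)x)$, I would read off the $i$th column of $A(n)$: from the description above, this column has a $+1$ in the row indexed by $(i,j)$ for every $j>i$, a $-1$ in the row indexed by $(k,i)$ for every $k<i$, and zeros elsewhere. Pairing these entries with the corresponding components of $sgn(A(n)x)$ gives
\[
\bigl(A^T(n)\,sgn(A(n)x)\bigr)_i = \sum_{j=i+1}^{n} sgn(x_i - x_j) \;-\; \sum_{k=1}^{i-1} sgn(x_k - x_i),
\]
which is exactly the claimed expression (with the convention, stated in the introduction, that an empty sum equals zero, handling the boundary indices $i=1$ and $i=n$).

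The only step that requires any actual work is the structural lemma identifying the rows of $A(n)$ with the pairs $e_i^T - e_j^T$; the rest is bookkeeping. The main obstacle I anticipate is writing the inductive step cleanly, because the recursive definition shifts the indices (the lower block acts on coordinates $2,\dots,n$ rather than $1,\dots,n-1$), so care is needed to translate the induction hypothesis about $A(n-1)$ into a statement about rows $e_i^T - e_j^T$ in $\mathbb{Z}^n$. Alternatively, one could avoid the auxiliary structural lemma altogether and instead do an induction directly on the formula: split the components into the case $i=1$ (handled by the top block) and $i\ge 2$ (handled by the bottom block plus the $-1$ contributed by the first column), and apply the induction hypothesis to the vector $(x_2,\dots,x_n)^T$ with shifted indices. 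I would lean toward the structural lemma approach since it gives a cleaner, more conceptual proof.
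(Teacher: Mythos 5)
Your proof is correct, but it takes a genuinely different route from the paper. The paper proves the lemma by a single induction on $n$ applied directly to the vector $A^T(n)\,sgn(A(n)x)$: it writes out the block form of $A^T(n)$, applies the induction hypothesis to $(x_2,\dots,x_n)^T$, and then merges the contribution $-sgn(x_1-x_i)$ from the first block column into the shifted sums coming from $A^T(n-1)$ --- essentially the ``alternative'' you sketch in your last paragraph, carried out with a fair amount of index bookkeeping. You instead isolate the real content into a structural lemma (proved by the same kind of induction) identifying the rows of $A(n)$ with the vectors $e_i^T-e_j^T$ for $1\le i<j\le n$, after which the claimed formula falls out of a direct reading of the $i$th column of $A(n)$ together with the empty-sum convention. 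Your version localizes the induction to a cleaner statement and makes the subsequent computation transparent; the paper's version avoids introducing an auxiliary indexing of rows by pairs but pays for it with a longer chain of displayed sums. Both are complete; the one point to handle carefully in your write-up is exactly the index shift you flag, namely translating the induction hypothesis for $A(n-1)$ on coordinates $1,\dots,n-1$ into a statement about coordinates $2,\dots,n$ after prepending the zero column.
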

\begin{proof} 
We prove this lemma by induction on $n$. For $n=2$ and $x = (x_1, x_2) \in \mathds{Z}^2$ 
with $x_1 \ne x_2$,
\[
A^T (2) sgn ( A(2)x ) = 
\begin{pmatrix} 
1 \\ 
-1
\end{pmatrix}
sgn (x_1 - x_2 ) =
\begin{pmatrix} 
sgn (x_1 - x_2 ) \\ 
- sgn ( x_1 - x_2 )
\end{pmatrix}.
\]
On the other side
\[
- \sum_{j=1}^{0}sgn (x_j - x_1 ) + \sum_{j=2}^{2} sgn (x_1 - x_j) = sgn ( x_1 - x_2 )
\]
and
\[
- \sum_{j=1}^{1} sgn (x_j - x_2) + \sum_{j=3}^{2} sgn (x_2 - x_j ) = - sgn (x_1 - x_2),
\]
which proves the claim for $n=2$. 

Assume the claim is true for $n-1$. Let $x = (x_1, \ldots, x_n)^T \in \mathds{Z}^n$ 
with $A(n) x <> \mathbf{0}$. The induction assumption reads as
\[
A^T(n-1) sgn (A(n-1)
\begin{pmatrix}
x_2 \\
\vdots \\
x_n
\end{pmatrix}
)
\]
\[
=
\begin{pmatrix} 
                                                                      &    & \sum\limits_{j=3}^{n} sgn (x_2 - x_j)  \\
- \sum\limits_{j=2}^{2} sgn (x_j - x_3)            & + & \sum\limits_{j=4}^{n} sgn (x_3 - x_j)  \\
& \vdots & \\
- \sum\limits_{j=2}^{n-2} sgn (x_j - x_{n-1}) & + & \sum\limits_{j=n}^{n} sgn (x_{n-1} - x_j)  \\
- \sum\limits_{j=2}^{n-1} sgn (x_j - x_n)       &     &
\end{pmatrix}.
\]
By definition of $A(n)$
\[
sgn ( A(n) x ) =
\begin{pmatrix} 
sgn (x_1 - x_2) \\
\vdots \\
sgn ( x_1 - x_n ) \\
sgn (A(n-1)
\begin{pmatrix}
x_2 \\
\vdots \\
x_n
\end{pmatrix}
)
\end{pmatrix}
\]
with $n-1$ components $sgn (x_1 - x_i )$ for $i = 2, \ldots, n$ and a vector 
$sgn(A(n-1)(x_2, \ldots, x_n)^T)$ with $s(n-1)$ components. Again by definition of $A(n)$
{
\renewcommand{\arraystretch}{1.5}     
\[
A^T (n) = 
\begin{pmatrix}    
\begin{array}{c|c}
\mathbf{1}_{n-1}^T & \mathbf{0}_{s(n-1)}^T \\
\hline
- U_{n-1}^T & A^T(n-1)
\end{array}
\end{pmatrix}.
\]
}
We obtain
\[
A^T(n) sgn (A(n) x )
\]
{
\renewcommand{\arraystretch}{1.5}     
\[
=
\begin{pmatrix} 
\sum\limits_{j=2}^{n} sgn (x_1 - x_j)  \\
\hline
-
\begin{pmatrix}
sgn (x_1 - x_2 ) \\
\vdots \\
sgn (x_1 - x_n )
\end{pmatrix}
+ A^T(n-1) sgn (A(n-1)
\begin{pmatrix}
x_2 \\
\vdots \\
 x_n
\end{pmatrix}
)
\end{pmatrix}
\]
}
\[
=
\begin{pmatrix} 
                                &    & \sum\limits_{j=2}^{n} sgn (x_1 - x_j)  & & \\
- sgn (x_1 - x_2)        &   &                                                          & + & \sum\limits_{j=3}^{n} sgn (x_2 - x_j)  \\
- sgn (x_1 - x_3)        & - & \sum\limits_{j=2}^{2} sgn (x_j - x_3) & + & \sum\limits_{j=4}^{n} sgn (x_3 - x_j)  \\
& & \vdots & & \\
- sgn (x_1 - x_{n-1}) & - & \sum\limits_{j=2}^{n-2} sgn (x_j - x_{n-1} ) & + & \sum\limits_{j=n}^{n} sgn (x_{n-1} - x_j)  \\
- sgn (x_1 - x_n)       & - & \sum\limits_{j=2}^{n-1} sgn (x_j - x_{n} ) &  &
\end{pmatrix}
\]
\[
=
\begin{pmatrix} 
                                &     & \sum\limits_{j=2}^{n} sgn (x_1 - x_j)  \\
- sgn (x_1 - x_2)        & + & \sum\limits_{j=3}^{n} sgn (x_2 - x_j) \\
- \sum\limits_{j=1}^{2} sgn (x_j - x_3) & + & \sum\limits_{j=4}^{n} sgn (x_3 - x_j)  \\
& \vdots & \\
- \sum\limits_{j=1}^{n-2} sgn (x_j - x_{n-1} ) & + & \sum\limits_{j=n}^{n} sgn (x_{n-1} - x_j)  \\
- \sum\limits_{j=1}^{n-1} sgn (x_j - x_{n} ) &  &
\end{pmatrix}
\]
and this shows the claim.
\end{proof} 

\begin{lemma}   \label{L:6}
Let $x = (x_1, \ldots, x_n)^T \in \mathds{Z}^n$ with $\{x_1, \ldots, x_n\} = \{ 1, \ldots, n \}$. Then
\[
- \sum_{j=1}^{i-1} sgn( x_j - x_i ) + \sum_{j=i+1}^{n} sgn ( x_i - x_j ) = 2 x_i - (n+1)
\]
for $i=1, \ldots, n$.
\end{lemma}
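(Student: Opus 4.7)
The proof is essentially a counting argument once one removes the asymmetry in the way the two sums in the statement are written. My plan has two short steps.

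First, I would use the fact that $\operatorname{sgn}$ is an odd function to rewrite the left‑hand side as a single symmetric sum. Specifically, $\operatorname{sgn}(x_j - x_i) = -\operatorname{sgn}(x_i - x_j)$, so
\[
- \sum_{j=1}^{i-1} \operatorname{sgn}(x_j - x_i) + \sum_{j=i+1}^{n} \operatorname{sgn}(x_i - x_j)
= \sum_{\substack{j=1 \\ j \ne i}}^{n} \operatorname{sgn}(x_i - x_j).
\]
This collapses the splitting into ``$j<i$'' and ``$j>i$'' indices, which was only an artifact of the construction of $A(n)$ in Lemma~\ref{L:5}.

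Second, I would evaluate the symmetric sum by counting. Because $\{x_1,\ldots,x_n\}=\{1,\ldots,n\}$ by hypothesis, all $x_j$ with $j\ne i$ are distinct from $x_i$; exactly $x_i - 1$ of them are strictly less than $x_i$ (namely those taking the values $1,\ldots,x_i-1$) and exactly $n - x_i$ of them are strictly greater (the values $x_i+1,\ldots,n$). Hence
\[
\sum_{\substack{j=1 \\ j \ne i}}^{n} \operatorname{sgn}(x_i - x_j)
= (x_i - 1)\cdot(+1) + (n - x_i)\cdot(-1)
= 2x_i - (n+1),
\]
which is exactly the claim.

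There is essentially no obstacle here: the statement is an identity that depends only on $x_i$ because the hypothesis fixes the multiset of the remaining components. The only care to take is the sign bookkeeping in the first step (the minus sign in front of the first sum combines with the antisymmetry of $\operatorname{sgn}$ to give a uniformly oriented comparison $x_i - x_j$), and the convention already recalled in the introduction that an empty sum is zero handles the boundary cases $i=1$ and $i=n$ without special treatment.
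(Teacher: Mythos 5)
Your proof is correct, but it takes a genuinely different and more direct route than the paper. The paper proves Lemma~\ref{L:6} by induction on $n$: it selects the index $i_0$ with $x_{i_0}=n$, deletes that component to form a vector $x'$ realizing $\{1,\ldots,n-1\}$, and then verifies the identity in three separate cases ($i<i_0$, $i>i_0$, $i=i_0$), each requiring careful re-indexing of the sums. You instead exploit the antisymmetry $sgn(x_j-x_i)=-sgn(x_i-x_j)$ to collapse the left-hand side into the single symmetric sum $\sum_{j\ne i} sgn(x_i-x_j)$, and then evaluate it by counting: since the components realize $\{1,\ldots,n\}$ exactly once each, precisely $x_i-1$ terms contribute $+1$ and $n-x_i$ contribute $-1$, giving $2x_i-(n+1)$ immediately. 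Your argument is shorter, avoids the case analysis and the bookkeeping of shifted indices entirely, and makes transparent why the answer depends only on the value $x_i$ and not on the position $i$; the paper's induction, by contrast, is structured to parallel the inductive definition of $A(n)$ used in Lemma~\ref{L:5}, but that parallelism is not actually needed here. Your handling of the boundary cases $i=1$ and $i=n$ via the empty-sum convention is also sound.
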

\begin{proof} 
We  prove this lemma by induction on $n$. For $n=1$ both sides of the equation equal zero. 
This proves the claim for $n=1$.

Assume $n \ge 2$ and the claim is true for $n-1$, i.e., for each 
$(x_1, \ldots, x_{n-1})^T \in \mathds{Z}^{n-1}$ 
with $\{x_1, \ldots, x_{n-1}\} = \{ 1, \ldots, n-1 \}$ we have
\[
- \sum_{j=1}^{i-1} sgn( x_j - x_i ) + \sum_{j=i+1}^{n-1} sgn ( x_i - x_j ) = 2 x_i - n
\]
for $i=1, \ldots, n-1$.

Let $x \in \mathds{Z}^n$ with $\{x_1, \ldots, x_n\} = \{1, \ldots, n\}$. Select the index 
$1 \le i_0 \le n$, such that $x_{i_0} = n$ and define $x^\prime \in \mathds{Z}^{n-1}$ by
\[
x^\prime = (x_1, \ldots, x_{i_0 - 1}, x_{i_0 + 1}, \ldots, x_n)^T.
\]
Let $i \in \{1, \ldots, n\}$ and we distinguish 3 cases.

Case 1: $i < i_0$. Then
\begin{align*}    
   & - \sum_{j=1}^{i-1} sgn ( x_j - x_i ) + \sum_{j=i+1}^{n} sgn ( x_i - x_j) \\
= &- \sum_{j=1}^{i-1} sgn ( x_j - x_i ) \\
    &+ \sum_{j=i+1}^{i_0 - 1} sgn ( x_i - x_j) + sgn (x_i - x_{i_0} ) + \sum_{j=i_0+1}^{n} sgn ( x_i - x_j) \\
= &- \sum_{j=1}^{i-1} sgn ( x^\prime_j - x^\prime_i ) + \sum_{j=i+1}^{n-1} sgn ( x^\prime_i - x^\prime_j) - 1 \\
= &2 x^\prime_i - n - 1 \\
= &2 x_i - (n + 1).
\end{align*}

Case 2: $i > i_0$. Then we set $l = i-1$ and obtain
\begin{align*}
   & - \sum_{j=1}^{i-1} sgn ( x_j - x_i ) + \sum_{j=i+1}^{n} sgn ( x_i - x_j) \\
= &- \sum_{j=1}^{i_0-1} sgn ( x_j - x_i ) - sgn (x_{i_0} - x_i) - \sum_{j=i_0 + 1}^{i-1} sgn ( x_j - x_i ) \\
   &+ \sum_{j=i+1}^{n} sgn ( x_i - x_j) 
\allowdisplaybreaks   \\    
= &- \sum_{j=1}^{i-2} sgn ( x^\prime_j - x^\prime_{i-1} ) + \sum_{j=i}^{n-1} sgn ( x^\prime_{i-1} - x^\prime_j) - 1 
\allowdisplaybreaks   \\
= &- \sum_{j=1}^{l-1} sgn ( x^\prime_j - x^\prime_l ) + \sum_{j=l+1}^{n-1} sgn ( x^\prime_l - x^\prime_j) - 1 
\allowdisplaybreaks   \\
= &2 x^\prime_{l} - n - 1 \allowdisplaybreaks   \\
= &2 x^\prime_{i-1} - n - 1 \\
= &2 x_i - (n + 1).
\end{align*}

Case 3: $i = i_0$. Then $x_i = x_{i_0} = n$ and
\begin{align*}
   & - \sum_{j=1}^{i-1} sgn ( x_j - x_i ) + \sum_{j=i+1}^{n} sgn ( x_i - x_j) 
\allowdisplaybreaks   \\
= &- \sum_{j=1}^{i-1} sgn ( x_j - n ) + \sum_{j=i+1}^{n} sgn ( n - x_j) 
\allowdisplaybreaks   \\
= &(i-1) + (n-i) 
\allowdisplaybreaks   \\
= &n - 1 \\
= &2 n - (n + 1) \\
= &2 x_i - (n + 1),
\end{align*}
and this completes the induction proof.
\end{proof} 

We combine the Lemma \ref{L:5} and \ref{L:6} with Lemma \ref{L:1} and \ref{L:2} and obtain

\begin{lemma}   \label{L:7}
Let $n \ge 2$ and $x = (x_1, \ldots, x_n)^T \in \mathds{Z}^n$ with components 
$1 \le x_i \le n$ for $i = 1, \ldots, n$ and let $A(n) x <> \mathbf{0}$. Then
\[
x = \frac{1}{2} ( A^T (n) sgn (A(n) x) + (n+1) \mathbf{1}_n ).
\]
\end{lemma}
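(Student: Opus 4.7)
The plan is to simply chain together the four previously established lemmas, so no new machinery is needed. The hypotheses of Lemma \ref{L:7} match exactly condition (i) of Lemma \ref{L:3} (that is, Lemma \ref{L:1} applied to Lemma \ref{L:2}), which instantly upgrades $A(n)x <> \mathbf{0}$ together with $1 \le x_i \le n$ into the stronger set-equality $\{x_1, \ldots, x_n\} = \{1, \ldots, n\}$. This is the bridge that lets us apply Lemma \ref{L:6}.

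Next, since $A(n)x <> \mathbf{0}$ holds, the vector $sgn(A(n)x)$ is well-defined by Definition \ref{D:sgn}, and Lemma \ref{L:5} gives an explicit formula for the $i$-th component of $A^T(n)\,sgn(A(n)x)$, namely
\[
-\sum_{j=1}^{i-1} sgn(x_j - x_i) + \sum_{j=i+1}^{n} sgn(x_i - x_j).
\]
Because we have just shown that $\{x_1,\ldots,x_n\} = \{1,\ldots,n\}$, Lemma \ref{L:6} applies and evaluates this same expression to $2x_i - (n+1)$.

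Combining these two identities componentwise yields
\[
A^T(n)\,sgn(A(n)x) = 2x - (n+1)\mathbf{1}_n,
\]
and solving for $x$ gives exactly the claimed formula. There is really no obstacle here; the only thing to be careful about is the logical order: one must first pass through Lemma \ref{L:3} to secure the set-equality hypothesis of Lemma \ref{L:6}, because Lemma \ref{L:5} alone would leave us with sign sums we cannot evaluate, and Lemma \ref{L:6} alone would not apply under the weaker hypothesis stated in Lemma \ref{L:7}.
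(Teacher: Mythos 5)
Your proposal is correct and follows exactly the paper's own route: Lemmas \ref{L:1} and \ref{L:2} (equivalently Lemma \ref{L:3}) upgrade the hypotheses to $\{x_1,\ldots,x_n\}=\{1,\ldots,n\}$, after which Lemmas \ref{L:5} and \ref{L:6} are combined componentwise to give $A^T(n)\,sgn(A(n)x)=2x-(n+1)\mathbf{1}_n$. Your remark on the necessary logical ordering is a slightly more explicit articulation of what the paper states tersely, but the argument is the same.
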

\begin{proof} 
Because of Lemma \ref{L:1} and \ref{L:2} we are able to combine the Lemma 
\ref{L:5} and \ref{L:6} and we obtain
\[
A^T(n) sgn (A(n) x) = 2 x - (n+1) \mathbf{1}_n
\]
and the rest is elementary arithmetic.
\end{proof} 

\begin{example}
We consider the case $n=9$. The matrix $A(9)$ is depicted in Fig. \ref{Fig2} and 
we choose the point $x =(2, 8, 1, 5, 9, 4, 6, 3, 7)^T$. Then $sgn( A(9) x )$
\[
= ( -1, 1, -1, -1, -1, -1, -1, -1, 1, 1, -1, 1, 1, 1, 1, -1, -1, -1,
\]
\[
 -1, -1, -1, -1, 1, -1, 1, -1, 1, 1, 1, 1, -1, 1, -1, 1, -1, -1)^T
\]
and finally
\[
\frac{1}{2} ( A^T (9) sgn( A(9) x ) + 10 \cdot \mathbf{1}_{9} )
\]
\[
= \frac{1}{2} (-6, 6, -8, 0, 8, -2, 2, -4, 4)^T + 5 \cdot \mathbf{1}_{9} = (2, 8, 1, 5, 9, 4, 6, 3, 7)^T = x .
\]
\end{example}

                        %
                        %
\section{The Necessary Condition} \label{S:neccond}
In this section we extend the condition on the matrix $A(n)$ in Section \ref{S:prelim} to the 
``large" matrices $A_\pi$ (see remarks prior to Definition \ref{D:nonzero}) for any permutation $\pi$ on 
$\{1, \ldots, n^2\}$.

\begin{theorem}      \label{T:1}
Let $n \ge 2$, let $\pi$ be any permutation on $\{1, \ldots, n^2\}$ and let 
$x = (x_1, \ldots, x_{n^2})^T \in \mathds{Z}^{n^2}$ with components 
$1 \le x_i \le n$ for $i = 1, \ldots, n^2$ and $A_\pi x <> \mathbf{0}$. Then
\[
x = \frac{1}{2} ( A^T_\pi sgn (A_\pi x) + (n+1) \mathbf{1}_{n^2} ).
\]
\end{theorem}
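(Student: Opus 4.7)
The plan is to reduce the claim, in two steps, to the already-established identity in Lemma \ref{L:7}. First I would prove the intermediate statement: for the block-diagonal matrix $A$ itself (i.e., the case $\pi = \mathrm{id}$), whenever $x \in \mathds{Z}^{n^2}$ has components in $\{1,\ldots,n\}$ and $Ax <> \mathbf{0}$, the formula
\[
x = \tfrac{1}{2}\bigl( A^T \operatorname{sgn}(Ax) + (n+1)\mathbf{1}_{n^2}\bigr)
\]
holds. This is essentially automatic: split $x$ into the $n$ blocks $y^{(k)} = (x_{(k-1)n+1},\ldots,x_{kn})^T$, observe that the block-diagonal structure of $A$ gives $Ax = ((A(n)y^{(1)})^T,\ldots,(A(n)y^{(n)})^T)^T$ and likewise $A^T \operatorname{sgn}(Ax) = ((A^T(n)\operatorname{sgn}(A(n)y^{(1)}))^T,\ldots)^T$, and then apply Lemma \ref{L:7} separately to each block $y^{(k)}$.

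Next I would extend this to arbitrary $\pi$ using Lemma \ref{L:4}. From part (i), the hypothesis $A_\pi x <> \mathbf{0}$ translates to $A\pi^{-1}(x) <> \mathbf{0}$; and since $\pi^{-1}(x)$ just permutes the entries of $x$, its components still lie in $\{1,\ldots,n\}$. So the intermediate statement applies to $\pi^{-1}(x)$, giving
\[
\pi^{-1}(x) = \tfrac{1}{2}\bigl( A^T \operatorname{sgn}(A\pi^{-1}(x)) + (n+1)\mathbf{1}_{n^2}\bigr) = \tfrac{1}{2}\bigl( A^T \operatorname{sgn}(A_\pi x) + (n+1)\mathbf{1}_{n^2}\bigr).
\]
Applying the permutation $\pi$ to both sides and using that $\pi$ fixes $\mathbf{1}_{n^2}$ yields
\[
x = \tfrac{1}{2}\bigl( \pi(A^T \operatorname{sgn}(A_\pi x)) + (n+1)\mathbf{1}_{n^2}\bigr).
\]
The final move is to invoke Lemma \ref{L:4}(ii) with $\lambda = \operatorname{sgn}(A_\pi x)$, which converts $\pi(A^T \lambda)$ into $A^T_\pi \lambda$ and produces the desired identity.

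The only point that requires a little care is checking that Lemma \ref{L:4}(ii) is actually applicable in the last step, since it is stated only for $\lambda \in \{-1,+1\}^{n \cdot s(n)}$. But the assumption $A_\pi x <> \mathbf{0}$ from Definition \ref{D:nonzero} says every component of $A_\pi x$ is nonzero, so its sign vector lies in $\{-1,+1\}^{n \cdot s(n)}$, and everything fits together. Beyond that, the argument is bookkeeping: there is no genuine analytical obstacle, and no separation theorem is invoked, in keeping with the remarks in the introduction.
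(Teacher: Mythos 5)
Your proposal is correct and follows essentially the same route as the paper's own proof: first the block-diagonal case $\pi=\mathrm{id}$ via Lemma \ref{L:7} applied blockwise, then the reduction of general $\pi$ through Lemma \ref{L:4}(i) and (ii). Your explicit check that $\operatorname{sgn}(A_\pi x)\in\{-1,+1\}^{n\cdot s(n)}$, so that Lemma \ref{L:4}(ii) applies, is a detail the paper leaves implicit but is handled correctly here.
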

\begin{proof}
In the first step we prove the theorem for $A_\pi =A$, i.e., $\pi = id$, the identity
mapping. Let $x = (x_1, \ldots, x_{n^2})^T \in \mathds{Z}^{n^2}$ 
with components $1 \le x_i \le n$ for $i = 1, \ldots, n^2$ and $A x <> \mathbf{0}$.
Using the diagonal structure of $A$ and Lemma \ref{L:7}, we obtain
\[
sgn ( A x ) =
\begin{pmatrix} 
sgn ( A (n)
\begin{pmatrix}
x_1 \\
\vdots \\
x_n
\end{pmatrix} ) \\
\vdots \\
sgn ( A (n)
\begin{pmatrix}
x_{(n-1) \cdot n + 1} \\
\vdots \\
x_{n^2}
\end{pmatrix} )
\end{pmatrix}
\]
and
\[
A^T sgn ( A x ) =
\begin{pmatrix} 
A^T(n) sgn ( A (n)
\begin{pmatrix}
x_1 \\
\vdots \\
x_n
\end{pmatrix} ) \\
\vdots \\
A^T(n) sgn ( A (n)
\begin{pmatrix}
x_{(n-1) \cdot n + 1} \\
\vdots \\
x_{n^2}
\end{pmatrix} )
\end{pmatrix}
\]
\newline
\[
=
\begin{pmatrix} 
2
\begin{pmatrix}
x_1 \\
\vdots \\
x_n
\end{pmatrix} 
- (n+1) \mathbf{1}_n \\
\vdots \\
2
\begin{pmatrix}
x_{(n-1) \cdot n + 1} \\
\vdots \\
x_{n^2}
\end{pmatrix}
- (n+1) \mathbf{1}_n
\end{pmatrix}
= 2 x - (n+1) \mathbf{1}_{n^2}.
\]
This shows the statement for $A_\pi = A$.

Now let $\pi$ be an arbitrary permutation on $\{1, \ldots, n^2 \}$. Let 
$x = (x_1, \ldots, x_{n^2})^T \in \mathds{Z}^{n^2}$ 
with components $1 \le x_i \le n$ for $i = 1, \ldots, n^2$ and 
$A_\pi x <> \mathbf{0}$. 

Using Lemma \ref{L:4} $(i)$ $A \pi^{-1}(x) = A_\pi x <> \mathbf{0}$ 
and we apply the result of the first part to $\pi^{-1} (x)$. Now using 
Lemma \ref{L:4} $(ii)$ we obtain
\begin{align*}    
A^T_\pi sgn ( A_\pi x ) & = \pi ( A^T sgn A \pi^{-1} ( x ) ) \\
                                   & = \pi ( 2 \pi^{-1}(x) - (n+1) \mathbf{1}_{n^2} ) \\
                                   & = 2 x - (n+1) \mathbf{1}_{n^2}
\end{align*}
and this completes the proof.
\end{proof}

This result can be applied to the generalized Sudoku problem.

\begin{theorem}      \label{T:2}
Let $n \ge 2$ and let $x \in \mathds{Z}^{n^2}$ be a solution of the generalized 
Sudoku problem. Then
\[
x = \frac{1}{2} ( A_{\pi_r}^T sgn (A_{\pi_r} x) + (n+1) \mathbf{1}_{n^2} ).
\]
for each $r=1, 2, 3$.
\end{theorem}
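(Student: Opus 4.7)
The plan is to observe that Theorem \ref{T:2} follows almost immediately from Theorem \ref{T:1} by specialization. Every hypothesis needed to invoke Theorem \ref{T:1} is built directly into the definition of a solution of the generalized Sudoku problem, so there is essentially nothing to do beyond verifying this.

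First I would unpack what it means for $x \in \mathds{Z}^{n^2}$ to be a solution of the generalized Sudoku problem. By the definition given in Section \ref{S:model}, such an $x$ satisfies $1 \le x_i \le n$ for $i = 1, \ldots, n^2$ and $A_{\pi_r} x <> \mathbf{0}$ for each $r = 1, 2, 3$ (the equations $x_{i_l} = g_{i_l}$ coming from the givens play no role here). These are precisely the two hypotheses appearing in the statement of Theorem \ref{T:1}.

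Next, for each fixed $r \in \{1, 2, 3\}$, I would apply Theorem \ref{T:1} with the permutation $\pi := \pi_r$. The theorem gives directly
\[
x = \frac{1}{2}\bigl( A^T_{\pi_r} sgn(A_{\pi_r} x) + (n+1)\mathbf{1}_{n^2} \bigr),
\]
which is exactly the conclusion of Theorem \ref{T:2}. Since $r$ was arbitrary in $\{1,2,3\}$, the identity holds for each of the three permutations.

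There is no real obstacle in this proof; the work has all been done in Theorem \ref{T:1}, which already handles an arbitrary permutation. The only subtlety worth flagging is that the right-hand side depends on $r$ through both $A_{\pi_r}$ and $sgn(A_{\pi_r} x)$, yet the left-hand side $x$ does not — so the theorem implicitly gives a nontrivial consistency condition across the three permutations, which is precisely what makes it useful as a necessary condition for the solvability of the generalized Sudoku problem.
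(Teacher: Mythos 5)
Your proof is correct and matches the paper's own argument, which likewise derives Theorem \ref{T:2} as an immediate specialization of Theorem \ref{T:1} to $\pi = \pi_r$ using the definition of a solution of the generalized Sudoku problem. Your version is simply a more explicit write-up of the same one-line deduction.
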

\begin{proof}
This follows immediately from Theorem \ref{T:1} and the definition 
of the generalized Sudoku problem.
\end{proof}

\begin{example}
We consider the case $n=3$, which implies $s(n) = 3$ and consider a solution 
$x \in \mathds{Z}^{3^2}$ which is depicted in this square:
\begin{align*}
\includegraphics{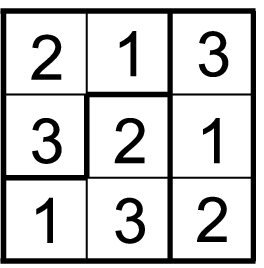}
\end{align*}
The matrix $A_{\pi_3}$ in this case looks like
\[
{
\renewcommand{\arraystretch}{0.5}     
\small                                                 
\begin{bmatrix}    
1 & -1 & 0 & 0 & 0 & 0 & 0 & 0 & 0\\
1 & 0 & 0 & -1 & 0 & 0 & 0 & 0 & 0\\
0 & 1 & 0 & -1 & 0 & 0 & 0 & 0 & 0\\

0 & 0 & 0 & 0 & 1 & 0 & -1 & 0 & 0\\
0 & 0 & 0 & 0 & 1 & 0 & 0 & -1 & 0\\
0 & 0 & 0 & 0 & 0 & 0 & 1 & -1 & 0\\

0 & 0 & 1 & 0 & 0 & -1 & 0 & 0 & 0\\
0 & 0 & 1 & 0 & 0 & 0 & 0 & 0 & -1\\
0 & 0 & 0 & 0 & 0 & 1 & 0 & 0 & -1
\end{bmatrix},
}
\]
$sgn( A_{\pi_3} x ) = ( 1, -1, -1, 1, -1 -1, 1, 1, -1)^T$ and finally
\[
\frac{1}{2} ( A_{\pi_3}^T sgn( A_{\pi_3} x ) + 4 \cdot \mathbf{1}_{3^2} )
\]
\[
= \frac{1}{2} (0, -2, 2, 2, 0, -2, -2, 2, 0)^T + 2 \cdot \mathbf{1}_{9} = (2, 1, 3, 3, 2, 1, 1, 3, 2)^T = x .
\]
\end{example}

Each solution of the generalized Sudoku problem coincides with the givens. This can be used in a
reformulation of Theorem \ref{T:2}. The brackets with an index in the subsequent corollary denote 
the corresponding component of a vector.

\begin{corollary}
Let $n \ge 2$ and let $x \in \mathds{Z}^{n^2}$ be a solution of the generalized 
Sudoku problem. Then
\[
g_{i_l} = \frac{1}{2} [ A_{\pi_r}^T sgn (A_{\pi_r} x ) + (n+1) \mathbf{1}_{n^2} ]_{i_l}
\]
for each $l = 1, \ldots, k$ and each $r = 1, 2, 3$.
\end{corollary}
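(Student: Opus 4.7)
The plan is to deduce the corollary as an immediate componentwise restriction of Theorem~\ref{T:2}, exploiting only the definition of a solution to the generalized Sudoku problem stated in Section~\ref{S:model}.

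First I would invoke Theorem~\ref{T:2}: because $x \in \mathds{Z}^{n^2}$ is a solution of the generalized Sudoku problem, it satisfies $1 \le x_i \le n$ and $A_{\pi_r} x <> \mathbf{0}$ for $r = 1,2,3$, so the theorem yields the vector identity
\[
x = \frac{1}{2}\bigl( A_{\pi_r}^T sgn(A_{\pi_r} x) + (n+1)\mathbf{1}_{n^2} \bigr)
\]
for each such $r$. No extra hypothesis needs to be checked.

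Next I would project onto the $i_l$-th coordinate. Since two vectors in $\mathds{Z}^{n^2}$ are equal iff all their components agree, the identity above gives
\[
x_{i_l} = \frac{1}{2}\bigl[ A_{\pi_r}^T sgn(A_{\pi_r} x) + (n+1)\mathbf{1}_{n^2} \bigr]_{i_l}
\]
for every $l \in \{1, \ldots, k\}$ and every $r \in \{1,2,3\}$. This step is formal: the indices $i_l$ lie in $\{1,\ldots,n^2\}$ by the setup of the problem, so the bracketed component is well-defined.

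Finally I would substitute the prescribed-givens constraint $x_{i_l} = g_{i_l}$ (which is part of the very definition of being a solution of the generalized Sudoku problem) into the left-hand side to obtain the claim. There is no genuine obstacle here: the corollary is a pure reformulation of Theorem~\ref{T:2} that isolates the component identities which involve known data $g_{i_l}$, thereby converting the theorem into a system of explicit consistency equations that each solution must satisfy.
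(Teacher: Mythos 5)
Your proposal is correct and matches the paper's own (implicit) argument exactly: the paper derives the corollary by combining Theorem~\ref{T:2} with the observation that every solution satisfies $x_{i_l} = g_{i_l}$ by definition, which is precisely your componentwise restriction and substitution. Nothing is missing.
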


                        %
                        %

\end{document}